\documentclass[12pt]{amsart}

\usepackage[letterpaper, margin=1in]{geometry}

\usepackage{amsmath,amssymb,amsthm,enumerate,amsfonts,nicefrac,fancyhdr,hyperref,graphicx,adjustbox,setspace,float}
\hypersetup{colorlinks=true, allcolors=blue} 

\newcommand{\inv}[1]{#1^{-1}}
\newcommand{\Z}{\mathbb{Z}}
\newcommand{\rarr}{\rightarrow}
\newcommand{\sube}{\subseteq}
\newcommand{\sub}{\subset}
\newcommand{\Mod}{\operatorname{Mod}}
\newcommand{\Aut}{\operatorname{Aut}}
\newcommand{\gpoid}{\pi_1(\Sigma, X)}
\newcommand{\gp}{\pi_1(\Sigma, x_0)}
\newcommand{\BCAut}{\operatorname{BCAut}}
\newcommand{\Out}{\operatorname{Out}}
\newcommand{\Homeo}{\operatorname{Homeo}}
\newcommand{\Inn}{\operatorname{Inn}}
\newcommand{\PAut}{\operatorname{PAut}}
\newtheorem{lemma}{Lemma}[subsection]
\newtheorem{theorem}{Theorem}[section]

\begin{document}
\nocite{*}

\title{The Dehn-Nielsen-Baer Theorem for bounded surfaces}
\begin{abstract}
    Let $\Sigma$ be a bounded surface. We prove the Dehn-Nielsen-Baer theorem for bounded surfaces to show that the mapping class group of $\Sigma$ is isomorphic to the automorphisms of the fundamental groupoid of $\Sigma$ that fix loops around the boundary.
\end{abstract}
\author{Elysia Wang}
\maketitle

\section{Introduction}
The \textit{mapping class group}, $\Mod(\Sigma)$, is the group of isotopy classes of orientation-preserving homeomorphisms of $\Sigma$, a surface without boundary. 
The \textit{extended mapping class group}, $\Mod^\pm(\Sigma)$, includes orientation-reversing homeomorphisms. 
The \textit{outer automorphism group} of the \textit{fundamental group} of $\Sigma$ is denoted as $\Out(\pi_1(\Sigma))$.
The Dehn-Nielsen-Baer Theorem tells us that $\Mod^\pm(\Sigma) \cong \Out(\pi_1(\Sigma))$, building a bridge between a purely topological object and a purely algebraic object. \\

An example of a useful application is in the study of $\Out(F_n)$, the outer automorphism group of the free group on $n$ generators. Let $\Gamma_n$ denote the bouquet of $n$ circles. The fundamental group of $\Gamma_n$ is isomorphic to $F_n$. In analogy to the Dehn-Nielsen-Baer theorem, we can study $\Out(F_n)$ 
by studying the mapping class group of $\Gamma_n$, since the group of homotopy equivalences of $\Gamma_n$ up to homotopy is isomorphic to $\Out(F_n)$. \\

But, what changes when we consider surfaces with boundary? If we allow boundary components on our surface, the fundamental group is no longer able to capture all of the topological information; $\pi_1(\Sigma, x_0)$ is not able to detect a Dehn twist around a boundary component, for $x_0$ not on the boundary.
We can resolve this by adding multiple basepoints to our surface and considering the \textit{fundamental groupoid}. By placing a basepoint on each boundary component, the fundamental groupoid is able to detect the information that the fundamental group would miss with just one basepoint. 
We will call automorphisms which stabilise any element represented by an arc entirely contained in a boundary component the \textit{boundary-constant automorphisms} of the fundamental groupoid,
$\BCAut(\gpoid)$, (see Section \ref{infbasepoints} for the formal definition). 
When we consider $\BCAut(\gpoid)$, we are able to identify these automorphisms of the fundamental groupoid with mapping class group elements. 
The requirement for automorphisms to be boundary-constant follows naturally from the fact that mapping class group elements fix the boundary pointwise. 
These are the only restrictions we need, which leads us to the main result:\\

\noindent
\textbf{Theorem 1.}
\textit{Let} $\Sigma$ \textit{be a bounded surface and let} $X\sub \partial\Sigma$ \textit{be a subset of the boundary with at least one point on each boundary component. Then, the map}
$$\Phi: \Mod(\Sigma) \rarr \BCAut(\gpoid),$$
\textit{given by the action of the mapping class group on the fundamental groupoid, is an isomorphism}

\subsection{Outline of the paper}
In Section 2, we discuss the Dehn-Nielsen-Baer theorem for surfaces without boundary and for bounded surfaces where homeomorphisms are not required to fix the boundary pointwise.
Section 3 contains facts about groupoids, the fundamental groupoid, and groupoid automorphisms, as well as proofs for some relevant lemmas.
The main theorem is proved in Section 4.

\subsection{Acknowledgements}
I was supported by Ross Willard's NSERC Discovery Grant no. RGPIN-2019-03931 during the undergraduate research term in which this research took place. I would like to thank my supervisor, Tyrone Ghaswala, for his guidance and support; this paper would not exist without him.

\section{The mapping class group}
Let $\Sigma$ be a genus $g\ge 0$ surface with $b\ge0$ boundary components.
Let $\Homeo^+(\Sigma, \partial\Sigma)$ denote the group of orientation-preserving
homeomorphisms of $\Sigma$ that restrict to the identity on $\partial\Sigma$.\\

The \textit{mapping class group} of $\Sigma$, denoted $\Mod(\Sigma)$, 
is the group of isotopy classes of elements of $\Homeo^+(\Sigma, \partial\Sigma)$,
where isotopies are required to fix the boundary pointwise.
Let $\Homeo_0(\Sigma, \partial\Sigma)$ denote the group of homeomorphisms isotopic to the identity.
Then, we can write
$$\Mod(\Sigma)
= \Homeo^+(\Sigma, \partial\Sigma)/\Homeo_0(\Sigma,\partial\Sigma).$$
Elements of Mod$(\Sigma)$ are called \textit{mapping classes} and we apply 
mapping classes right to left.

\subsection{The Dehn-Nielsen-Baer Theorem}
The Dehn-Nielsen-Baer Theorem is a powerful result for 
surfaces without boundary. 
It ties together the mapping class group—a purely topological object—with
the outer automorphism group of the fundamental group—a purely algebraic object.
It is this theorem that we will extend upon. \\

For a surface without boundary, $S$, let $\Mod^\pm(S)$ denote the 
\textit{extended mapping class group} of $S$. $\Mod^\pm(S)$ is the group of isotopy classes of
all homeomorphisms of $S$, including orientation-reversing ones. \\

The \textit{outer automorphism group} of a group $G$, denoted $\Out(G)$, is the group of
automorphisms of $G$ up to conjugation. Formally, we have
$\Out(G) = \Aut(G)/\Inn(G)$, where $\Aut(G)$ is the group of all automorphisms of $G$
and $\Inn(G)$ is the \textit{inner automorphism group} of $G$—the group of automorphisms
which map $g\mapsto hg\inv{h}$ for all $g\in G$, for some $h\in G$. \\

Let $x\in S$, let $\phi: S\rarr S$ be a continuous map, and take a path $\gamma$ from $x$ to $\phi(x)$. We get a homomorphism $\phi_*:\pi_1(S, x) \rarr \pi_1(S, x)$, given as follows. Let $\alpha$ be a loop based at $x$ and set $\phi_*([\alpha]) = [\gamma \cdot \phi(\alpha) \cdot \gamma^{-1}]$. Then, by fixing $\phi$, different choices of $\gamma$ give maps $\phi_*$ that differ by conjugation. 
Similarly, different choices of basepoint $x$ give maps $\phi_*$ that only differ by conjugation, since we can let $\gamma'$ be a path from $x'$ to $x$ composed with $\gamma$, where $x'$ is the new basepoint. Thus, the map to $\Out(\pi_1)$ is independent of our choice of basepoints and paths between basepoints.
When $\phi$ is a homeomorphism, it is invertible, and thus $\phi_*$ is an automorphism. Hence, we have a well-defined homomorphism $\sigma: \Mod^\pm(S) \rarr \Out(\pi_1(S))$. \\

\begin{theorem}(Dehn-Nielsen-Baer)
    \textit{Let $g\ge1$ and let $S_g$ be a surface of genus $g$. The homomorphism}
    $$\sigma: \Mod^\pm(S_g) \rarr \Out(\pi_1(S_g))$$ 
    \textit{is an isomorphism.}
\end{theorem}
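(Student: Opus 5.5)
We prove that $\sigma$ is injective and surjective separately, treating $g=1$ first. For $g=1$, $\pi_1(S_1)\cong\Z^2$, so $\Out(\Z^2)=\Aut(\Z^2)=GL_2(\Z)$. The linear maps of $\mathbb{R}^2$ preserving $\Z^2$ descend to homeomorphisms of the torus $\mathbb{R}^2/\Z^2$ realising every element of $GL_2(\Z)$, so $\sigma$ is surjective. For injectivity, a homeomorphism acting trivially on $\pi_1$ lifts to a map of $\mathbb{R}^2$ commuting with the deck translations. The straight-line homotopy to the identity is equivariant, so it descends to a homotopy on the torus. We then upgrade this homotopy to an isotopy as in the general case below.

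For $g\ge2$ we use hyperbolic geometry. Fix a hyperbolic metric on $S_g$ and identify $\pi_1(S_g)$ with a cocompact Fuchsian group acting on $\mathbb{H}^2$.

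\textbf{Surjectivity.} Given $\psi\in\Aut(\pi_1(S_g))$, the Milnor–Švarc lemma shows that the orbit map $\gamma\mapsto\gamma\cdot p$ is a quasi-isometry from $\pi_1(S_g)$ with a word metric to $\mathbb{H}^2$. Hence $\psi$ induces a $\psi$-equivariant quasi-isometry of $\mathbb{H}^2$, which extends to an equivariant homeomorphism $\partial\psi$ of the circle at infinity $S^1_\infty$. Because $\partial\psi$ is a homeomorphism of the circle, it preserves the linking of pairs of endpoints. For each simple closed geodesic $c$ in a fixed filling collection (say the curves of a pants decomposition together with curves transverse to them), we take the lifts $\tilde c$ and apply $\partial\psi$ to their endpoint pairs. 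This produces a $\psi$-invariant family of geodesics in $\mathbb{H}^2$ with the same pairwise intersection pattern. Projecting to $S_g$ gives simple closed geodesics with the same combinatorics as the original collection. Since the curves fill, we can match the complementary polygons cell by cell and build a homeomorphism $f$ with $f_*=\psi$ up to conjugation. Alternatively, we can invoke the fact that $S_g$ is a $K(\pi,1)$: $\psi$ is realised by a homotopy equivalence $h$. Then $\partial\psi$ shows that $h$ sends simple closed curves to curves homotopic to simple ones while preserving geometric intersection numbers, so $h$ is homotopic to a homeomorphism.

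\textbf{Injectivity.} If $\sigma(f)=1$, then because $S_g$ is aspherical, $f$ is homotopic to the identity. The step of passing from homotopic to isotopic is Baer's theorem. Pick a filling collection of simple closed curves $\alpha_1,\dots,\alpha_n$. Since $f(\alpha_i)\simeq\alpha_i$, the bigon criterion lets us isotope $f$ so that it fixes each $\alpha_i$ setwise. We then arrange that $f$ fixes the $\alpha_i$ pointwise, using that $f$ induces the identity on $\pi_1$ and so cannot permute or reverse the curves. Finally, the Alexander lemma on the complementary disks finishes the isotopy.

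I expect surjectivity to be the main obstacle, specifically converting the boundary homeomorphism $\partial\psi$ (or the homotopy equivalence) into an actual homeomorphism. The key input there is that simplicity and intersection data of curves are detected by linking of endpoints on $S^1_\infty$.
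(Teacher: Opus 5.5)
Your overall route is the one in the reference the paper cites for this theorem, \cite[Chapter 8]{primer}; the paper gives no proof of its own. The torus case, injectivity via asphericity plus Baer's theorem, and the $\psi$-equivariant homeomorphism $\partial\psi$ of $S^1_\infty$ are all fine.

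The gap is in the step you yourself flag as the crux. Linking of endpoints tells you \emph{which} lifts cross. So it recovers simplicity and pairwise intersection numbers, but it does not determine the cell structure of the arrangement. When three lifts pairwise cross, the order in which one of them meets the other two is not determined by the cyclic order of the six endpoints. In the Klein model, two mutually crossing chords that both cross a diameter can meet each other above or below it, with the same endpoint order. So a triangle in your collection can be flipped in the image, and ``same combinatorics'' (hence ``match the complementary polygons cell by cell'') does not follow. A pants decomposition plus transverse curves can easily contain three pairwise intersecting curves. Your alternative route does not avoid this: ``so $h$ is homotopic to a homeomorphism'' is precisely this step, asserted rather than proved.

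The repair is to choose the filling collection so that among any three curves some pair is disjoint, and any two curves meet at most once. A chain $c_1,\dots,c_{2g+1}$ works; compare the hypotheses the paper lists when it applies the Alexander method in Section 4. Then no three lifts pairwise cross. Two lifts crossing a given lift $\tilde c$ are therefore disjoint with nested endpoints, so the order of crossings along $\tilde c$ is read off from $S^1_\infty$ and preserved by $\partial\psi$.

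Build the map upstairs: send the lift that is the axis of $\gamma$ to the axis of $\psi(\gamma)$, vertices to vertices and edges to edges, and extend over the complementary polygons $\psi$-equivariantly. Equivariance is what makes the descended homeomorphism realise $\psi$ in $\Out(\pi_1(S_g))$, rather than merely carrying each curve to the right geodesic.

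The same hypotheses are what your injectivity sketch silently uses. The bigon criterion straightens one curve at a time. Making all the $\alpha_i$ invariant simultaneously is the Alexander method, which needs the no-triangle condition. Ruling out a rotation of intersection points along a curve uses that each pair meets at most once.
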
 

A proof of the Dehn-Nielsen-Baer Theorem can be found in \cite[Chapter 8]{primer}.

\subsection{For bounded surfaces}
The following theorem is an extension of the 
Dehn-Nielsen-Baer Theorem to bounded surfaces where homeomorphisms are not required to fix the boundary pointwise. A proof can be found in \cite[Section 5.7]{zieschang}. \\

\begin{theorem}\label{boundeddnb}
    \textit{An automorphism $\alpha$ of the fundamental group of a bounded surface $\Sigma$ is induced by a homeomorphism if and only if 
    $\alpha(s_i) = l_i s^{\varepsilon}_{r_i}l^{-1}_i$,
    $i = 1, ..., m$ where ${1 \cdots m}\choose{r_1 \cdots r_m}$ is a permutation and 
    $\varepsilon = \pm 1, l_i \in \gp$}.
\end{theorem}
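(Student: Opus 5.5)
The forward direction is elementary. Suppose $\alpha$ is induced by a homeomorphism $h$ of $\Sigma$, with $\alpha = h_*$ composed with conjugation by a path $\gamma$ from $x_0$ to $h(x_0)$. Let $s_i$ denote the class of a loop formed by a path from $x_0$ to the $i$-th boundary component, the boundary circle, and the path back. A homeomorphism maps $\partial\Sigma$ to itself, so it permutes the boundary circles. Hence $h$ carries the $i$-th boundary circle onto the $r_i$-th one, preserving or reversing orientation. Therefore $h(s_i)$ is freely homotopic to $s_{r_i}^{\pm1}$. Translating free homotopy into based conjugacy gives $\alpha(s_i) = l_i s_{r_i}^{\varepsilon} l_i^{-1}$. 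The sign $\varepsilon$ is the same for all $i$, since it equals $+1$ or $-1$ according to whether $h$ preserves or reverses the orientation of $\Sigma$, and that orientation induces the boundary orientations.

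For the converse I would pass to the closed case and invoke the Dehn-Nielsen-Baer Theorem stated above. Glue to each boundary circle of $\Sigma$ a copy of a fixed surface $T$ with one boundary component; a torus with a disc removed works. The result is a closed surface $\hat\Sigma$ of genus at least $1$. By van Kampen, $\pi_1(\hat\Sigma)$ is an amalgam of $\pi_1(\Sigma)$, a free group, with the groups $\pi_1(T)$, identifying each $s_i$ with the boundary word $[a,b]$ of its copy of $T$.

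Given $\alpha$ satisfying the hypothesis, I would extend it to an automorphism $\hat\alpha$ of $\pi_1(\hat\Sigma)$ as follows.
\begin{itemize}
\item On $\pi_1(\Sigma)$, set $\hat\alpha = \alpha$.
\item On the $i$-th copy of $\pi_1(T)$, use conjugation by $l_i$ composed with an identification onto the $r_i$-th copy. When $\varepsilon=1$ this identification is the identity of $T$. When $\varepsilon=-1$ it is an orientation-reversing automorphism of $T$ sending $[a,b]$ to $[a,b]^{-1}$, for example $a\mapsto b$, $b\mapsto a$.
\end{itemize}
These maps agree on the amalgamated elements by the hypothesis, so they define a homomorphism. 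The same construction applied to $\alpha^{-1}$ gives its inverse, so $\hat\alpha$ is an automorphism. Dehn-Nielsen-Baer then provides a homeomorphism $\hat h$ of $\hat\Sigma$ inducing $\hat\alpha$ up to conjugation.

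The main obstacle is showing that $\hat h$ can be isotoped to preserve $\Sigma$, and that its restriction to $\Sigma$ induces $\alpha$.
\begin{enumerate}
\item Let $c_i$ be the gluing curves. The free homotopy class of $\hat h(c_i)$ equals that of $c_{r_i}$, because $\hat\alpha$ maps $s_i$ to a conjugate of $s_{r_i}^{\pm1}$.
\item The curves $c_i$ are disjoint and essential (and non-peripheral, since $\hat\Sigma$ is closed). By the standard fact that homotopic simple closed curves are isotopic, extended to disjoint multicurves, we may isotope $\hat h$ so that $\hat h(\bigcup c_i)=\bigcup c_i$.
\item The homeomorphism $\hat h$ maps $\Sigma$ to itself rather than swapping it with some copy of $T$. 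This holds because $\hat\alpha$ carries $\pi_1(\Sigma)$ to a conjugate of $\pi_1(\Sigma)$, and this subgroup is not conjugate into any $\pi_1(T)$.
\item The restriction $h=\hat h|_\Sigma$ induces $\alpha$ up to an inner automorphism of $\pi_1(\hat\Sigma)$ that preserves $\pi_1(\Sigma)$. I would argue that such an inner automorphism comes from an element of $\pi_1(\Sigma)$, using malnormality-type properties of vertex groups in the amalgam.
\end{enumerate}

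Two subcases need separate handling. If $\Sigma$ is a disc or an annulus, the statement can be checked directly. If the fundamental groups involved are free, an alternative route avoids the closed-surface theory: use Nielsen's realization of automorphisms of free groups that preserve the peripheral structure. In the write-up I would follow the doubling/capping argument above and treat the small cases by hand.
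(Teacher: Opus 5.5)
The paper gives no proof of this statement. It quotes it from Zieschang--Vogt--Coldewey (Section 5.7) and uses it as a black box in the surjectivity argument of Section 4, so your proposal is a genuinely different, self-contained route.

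You deduce the bounded case from the closed Dehn--Nielsen--Baer theorem (Theorem 2.1) as follows: cap each boundary circle with a one-holed torus, extend $\alpha$ over the resulting tree of groups via the amalgam's universal property, realize the extension on $\hat\Sigma$, and cut back along the capping curves. As far as I can check, the outline holds up:
\begin{enumerate}
\item For $\chi(\Sigma)<0$ the $c_i$ are essential and pairwise non-isotopic in a closed surface of genus $g+m\ge 2$, so the multicurve isotopy lemma applies.
\item Step (3) is automatic for $m\ge 2$ by counting boundary curves. For $m=1$ it holds because a non-cyclic subgroup fixing two distinct vertices of the Bass--Serre tree would fix an edge, and so lie in a conjugate of a cyclic edge group.
\item The same observation gives step (4). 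If $w$ normalizes $\pi_1(\Sigma)$, then $\pi_1(\Sigma)$ fixes both its vertex $v$ and $wv$, which forces $wv=v$, i.e.\ $w\in\pi_1(\Sigma)$. That conjugation can then be absorbed into the choice of path.
\end{enumerate}

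Two minor points. Take the conjugators for $\alpha^{-1}$ to be $l'_{r_i}=\alpha^{-1}(l_i)^{-1}$, so that $\widehat{\alpha^{-1}}$ is literally a two-sided inverse of $\hat\alpha$. Also drop the case ``if the fundamental groups involved are free'': the fundamental group of a bounded surface is always free, so it is not a separate case.

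As for what each approach buys: the citation gives brevity and spares the paper the isotopy and Bass--Serre arguments and the disc and annulus special cases. Your route makes the paper's main theorem rest only on the closed Dehn--Nielsen--Baer theorem already quoted from the Primer, plus standard facts about curves. Note that your forward direction's single sign $\varepsilon$, like the statement itself, uses orientability of $\Sigma$; this is the paper's standing assumption.
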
 

In Theorem \ref{boundeddnb}, the $s_i$ are simple closed curves based at $x_0$ that are freely homotopic to a boundary component,
as shown in Figure \ref{fig:1} below.
In Section \ref{proof}, we will discuss how we can adjust this 
theorem in regards to the mapping class group, which does require homeomorphisms to fix the boundary.

\begin{figure}[!ht]
    \includegraphics[width=1\linewidth,trim=50 100 50 50, clip]{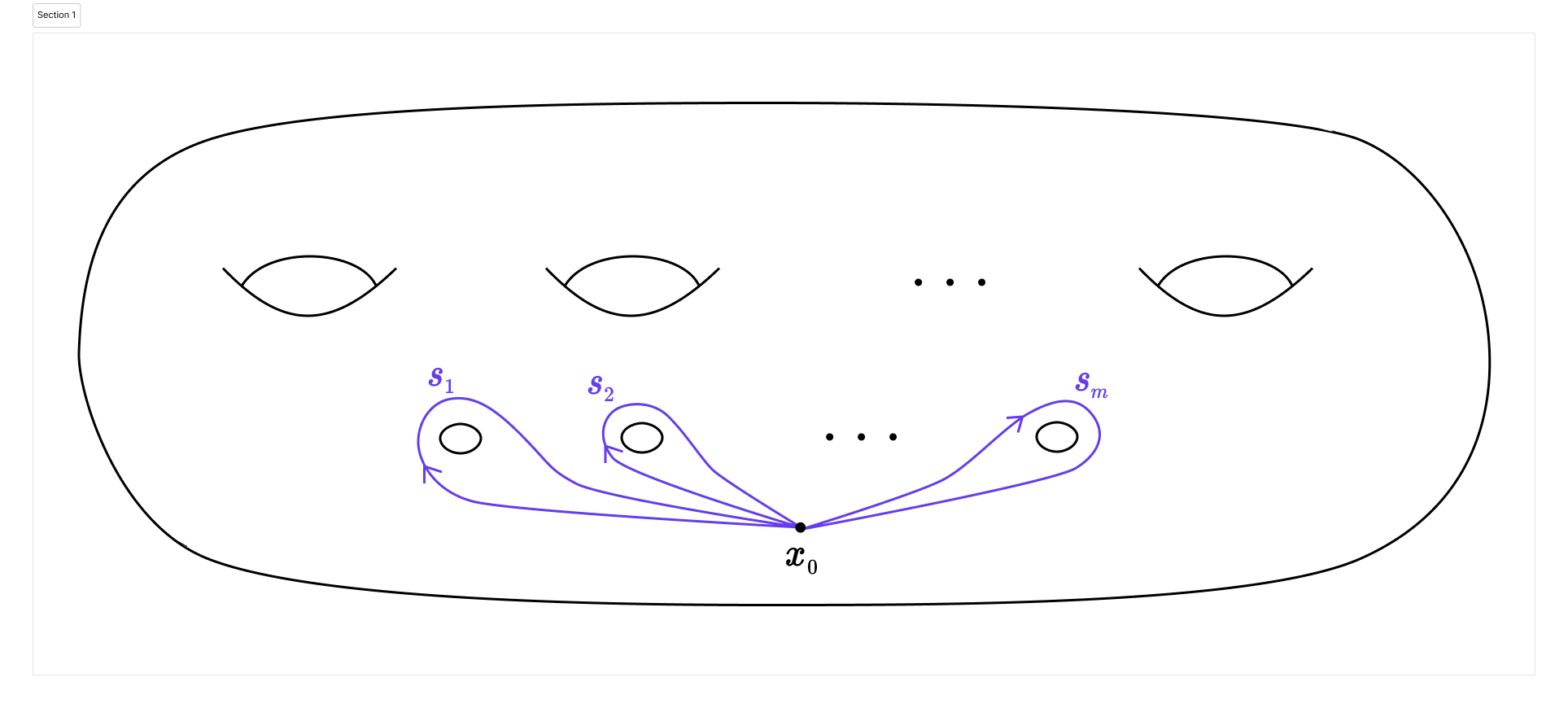}
    \caption{$s_i$ around a surface with $m$ boundary components.}
    \label{fig:1}
\end{figure}

\section{Groupoids and the fundamental groupoid}

A \textit{groupoid} $\mathcal G$ consists of a disjoint collection of sets
$\{G_{ij}\}_{i, j\in I}$ and an associative partial operation, 
$\cdot: G_{ij}\times G_{jk} \rarr G_{ik}$, which satisfy the following:
\begin{itemize}
    \item for each $i\in I$, there is an identity element, $e_i\in G_{ii}$, such that
    $e_i\cdot a = a$ and $b\cdot e_i = b$ for all $a\in G_{ij}$ and $b\in G_{ji}$
    \item for each $a\in G_{ij}$, there is an inverse element, $a^{-1}\in G_{ji}$, such that
    $a\cdot \inv{a} = e_i$ and $\inv{a}\cdot a = e_j$.
\end{itemize}
$I$ is called the \textit{object set} of $\mathcal G$ and when
$|I| = 1$, $\mathcal G$ is a group. In fact, $G_{ii}$ is a group 
for all $i\in I$, and we call these \textit{vertex groups}.
We also have \textit{source} and \textit{target} maps, 
$s, t: \mathcal G \rarr I$. For $g\in G_{ij}$, $s(g) = i$ and $t(g) = j$.
We say a groupoid is \textit{connected} if $G_{ij} \ne \emptyset$ 
for all $i, j\in I$, in which case we have
$G_{ii} \cong G_{jj}$ for all $i, j\in I$.\\ 

Fix some $i_0\in I$ and we define a \textit{star} to be 
$\{\iota_i\}_{i\in I}$,
where $\iota_i$ is a chosen element of $G_{i_0i}$ and 
$\iota_{i_0} = e_{i_0}$. 
Note that $\mathcal G$ is generated by $G_{i_0i_0}$ and $\{\iota_i\}_{i\in I}$,
 and we can uniquely write any element of $\mathcal G$ as 
$\iota_i^{-1}g\iota_j$ for some $g\in G_{i_0i_0}$.\\

\subsection{Automorphisms of groupoids\label{automorphisms}}
For a groupoid $\mathcal G$ with object set $I$, a \textit{homomorphism} $\phi$ of $\mathcal G$ is a function $\hat\phi:I\rarr I$ with functions
$\hat\phi_{ij}:G_{ij}\rarr G_{\hat\phi(i)\hat\phi(j)}$ for all
$i, j\in I$ such that:
\begin{itemize}
    \item $\hat\phi_{ij}(a)\hat\phi_{jk}(b) = \hat\phi_{ik}(ab)$ 
    for all $a\in G_{ij}$ and $b\in G_{jk}$
    \item $\hat\phi_{ii}(e_i) = e_{\hat\phi(i)}$ for all $i\in I$
    \item $\hat\phi_{ji}(g^{-1}) = \hat\phi_{ij}(g)^{-1}$
    for all $i, j\in I$ and $g\in G_{ij}$
    \item $s(\hat\phi_{ij}(g)) = \hat\phi(s(g))$ and $t(\hat\phi_{ij}(g)) = \hat\phi(t(g))$
    for all $g\in \mathcal G_{ij}$
\end{itemize}

An \textit{automorphism} of $\mathcal G$ is a homomorphism $\phi: \mathcal G\rarr \mathcal G$ with a two-sided inverse. The set of automorphisms of $\mathcal G$ forms a group under 
composition, which we denote as $\Aut(\mathcal G)$. 
Moving forward, we will simply use
$\phi$ to denote $\hat \phi$ and $\hat \phi_{ij}$.\\

A \textit{pure automorphism} of $\mathcal G$ is an 
automorphism of $\mathcal G$ that 
acts trivially on the object set. The set of all pure automorphisms forms a group called the  
\textit{pure automorphism group} of $\mathcal G$, and is denoted by 
$$\PAut(\mathcal G) = 
\{\phi\in \Aut(\mathcal G): \phi(i) = i \text{ for all }
i\in I\}.$$

There is a straightforward way to represent a pure
automorphism of a groupoid $\mathcal G$ with a finite object set. Let $\varphi$ be an 
automorphism of $\mathcal G$ so that $\varphi(i) = i$ for all $i\in I$. 
First, we pick a vertex group of $\mathcal G$,
$G = G_{0}$.
 Then we may write 
$\varphi = (\phi, (g_1, g_2, ..., g_n))$, where $\phi$
is an automorphism of $G$, $g_i\in G$, and $|I| = n+1$. 
For any element of the groupoid, $c\in G_{ij}$, we write 
$c = \inv{\iota_i} b \iota_j$ for some $b\in G$, and 
we have $$\varphi(c) = \inv{(g_i \iota_i)} \phi(b) (g_j \iota_j).$$
A rigorous proof can be found in \cite[Section 3]{AlpWensley2009}.\\ 

Furthermore, the following lemma tells us how to represent a pure automorphism of a groupoid with an infinite object set.\\

\begin{lemma} \label{infbaseptsisom}
\textit{Let $G$ be a vertex group of the connected groupoid $\mathcal G$. Then,}
$$(\Aut(G) \ltimes \prod_{i\in I}G) \slash \{(\Inn_g, (g^{-1}, g^{-1}, ...)) \mid g\in G\} \cong \PAut(\mathcal G)$$
\textit{where $g_i\in G$, $I$ is the (possibly infinite) object set, and $\Inn_g$ denotes conjugation by $g$.}\\
\end{lemma}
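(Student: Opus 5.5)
We will build an explicit surjective homomorphism $\Psi\colon \Aut(G)\ltimes\prod_{i\in I}G \rarr \PAut(\mathcal G)$ and show that its kernel is exactly $\{(\Inn_g,(g^{-1},g^{-1},\dots))\mid g\in G\}$. Fix $i_0\in I$ with $G=G_{i_0i_0}$ and a star $\{\iota_i\}_{i\in I}$ with $\iota_{i_0}=e_{i_0}$. Since every element of $\mathcal G$ can be written uniquely as $c=\inv{\iota_i}b\iota_j$ with $b\in G$, we set
$$\Psi(\phi,(g_i)_{i\in I})(c)=\inv{(g_i\iota_i)}\,\phi(b)\,(g_j\iota_j),$$
exactly as in the finite case. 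The coordinate indexed by $i_0$ is included in the product, which is why the quotient is needed.

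\textbf{Step 1: well-definedness and the homomorphism property.} Uniqueness of the normal form makes $\Psi(\phi,(g_i))$ a well-defined function on $\mathcal G$ that fixes every object. Multiplicativity is routine: if $c=\inv{\iota_i}b\iota_j$ and $c'=\inv{\iota_j}b'\iota_k$, then $cc'=\inv{\iota_i}bb'\iota_k$, and the middle factors $(g_j\iota_j)\inv{(g_j\iota_j)}$ cancel. Identities and inverses follow in the same way. The inverse map is $\Psi(\inv\phi,(\inv\phi(g_i^{-1})))$, so $\Psi(\phi,(g_i))$ lies in $\PAut(\mathcal G)$.

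Next we fix the semidirect product structure so that $\Psi$ is a homomorphism. Composing gives
$$\Psi(\phi,(g_i))\circ\Psi(\psi,(h_i))(c)=\inv{(\phi(h_i)g_i\iota_i)}\,\phi\psi(b)\,(\phi(h_j)g_j\iota_j),$$
so the product must be defined by $(\phi,(g_i))(\psi,(h_i))=(\phi\psi,(\phi(h_i)g_i))$. This is a semidirect product in which $\Aut(G)$ acts coordinatewise, and we adopt this convention.

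\textbf{Step 2: surjectivity.} Let $\varphi\in\PAut(\mathcal G)$. Its restriction $\phi=\varphi|_G$ is an automorphism of $G$. For each $i$, the element $\varphi(\iota_i)$ lies in $G_{i_0i}$, so $g_i:=\varphi(\iota_i)\inv{\iota_i}$ lies in $G$. Then $\varphi(\inv{\iota_i}b\iota_j)=\inv{(g_i\iota_i)}\phi(b)(g_j\iota_j)$, so $\varphi=\Psi(\phi,(g_i))$ with $g_{i_0}=e$. Since $I$ may be infinite, the product $\prod_{i\in I}G$ must be the full product rather than the restricted one; the construction above uses no finiteness.

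\textbf{Step 3: the kernel.} This is the step needing care. Suppose $\Psi(\phi,(g_i))=\mathrm{id}$. Taking $i=j$ and $b=e$ gives $\inv{\iota_i}\iota_i=\inv{\iota_i}g_i^{-1}g_i\iota_i$, which carries no information. Instead, evaluate on $c=\iota_j$, that is, $i=i_0$ and $b=e$. This gives $g_{i_0}^{-1}g_j\iota_j=\iota_j$, so $g_j=g_{i_0}=:h$ for all $j$. Evaluating on $b\in G$ gives $h^{-1}\phi(b)h=b$, so $\phi=\Inn_{h}$. Writing $g=h^{-1}$, the kernel element is $(\Inn_{g^{-1}}\!,\dots)$ up to the convention for $\Inn_g$. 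We must check the sign convention so that it matches $(\Inn_g,(g^{-1},g^{-1},\dots))$. With $\Inn_g(b)=gbg^{-1}$, the pair $(\Inn_g,(g^{-1},\dots))$ sends $b$ to $g\,gbg^{-1}g^{-1}$, which is not $b$. So either the formula should be written $\Psi(c)=(g_i\iota_i)^{-1}$-style with the opposite side, or $\Inn_g$ means $b\mapsto g^{-1}bg$. We will adjust the placement of $g_i$ in the formula (for example, using $\inv{\iota_i}g_i^{-1}\phi(b)g_j\iota_j$) so that the kernel is exactly the stated subgroup. Conversely, every element of that subgroup acts trivially. The first isomorphism theorem then yields the lemma; in particular, the kernel is a normal subgroup, so no separate normality check is needed.

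The main obstacle is bookkeeping of conventions, namely the semidirect product action and the conjugation direction, so that the kernel literally equals the displayed subgroup. The mathematical content is straightforward.
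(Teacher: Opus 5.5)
Your proposal is correct and follows essentially the paper's route: the same map built from the star, with the kernel found by evaluating on $\iota_j$ and on $G$. It also improves on the paper by proving surjectivity directly (the paper cites Alp--Wensley) and by deriving the law $(\phi,(g_i))(\psi,(h_i))=(\phi\psi,(\phi(h_i)g_i))$, which the paper's homomorphism computation ignores.

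One slip in Step 3: your proposed replacement $\iota_i^{-1}g_i^{-1}\phi(b)g_j\iota_j$ is literally the original formula $(g_i\iota_i)^{-1}\phi(b)(g_j\iota_j)$, so it fixes nothing. A working fix is either $\iota_i^{-1}g_i\phi(b)g_j^{-1}\iota_j$, which with $\Inn_g(b)=gbg^{-1}$ has kernel exactly $\{(\Inn_g,(g^{-1},g^{-1},\dots))\}$ and gives the standard law $(\phi\psi,(g_i\phi(h_i)))$, or your other option $\Inn_g(b)=g^{-1}bg$. The paper's own computation, which yields $g_k=g_0$ and $\psi=g_0(\cdot)g_0^{-1}$, has the same mismatch.
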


\textit{Proof.}
Fix a star $\{\iota_i \colon i\in I\} \setminus \{\iota_0\}$ based at $\iota_0$. Denote $\textbf{g} := (g_i)_{i\in I}\in \prod_{i\in I}G $, where $g_i \in G$ for all $i$.
We define the map $\Gamma: \Aut(G) \ltimes \prod_{i\in I}G \rarr \PAut(\mathcal G)$ by $(\psi, \textbf g) \mapsto \phi$ where $\phi(\iota_i^{-1}\alpha\iota_j) = g_i\iota_i^{-1}\psi(\alpha)g_j\iota_j$ and $\alpha$ is an element of $G$.
\\

Note that $\Gamma$ is well-defined since $\phi$ maps groupoid elements to groupoid elements, while preserving the starting and ending objects. Also, $\Gamma$ is a homomorphism since 
\begin{align*}
    (\phi'\circ\phi)(\iota_i^{-1}\alpha\iota_j) 
    &= \phi'(g_i\iota_i^{-1}\psi(\alpha)g_j\iota_j) \\
    &= g_i'g_i\iota_i^{-1}(\psi'\circ\psi)(\alpha)g_j'g_j\iota_j\\
    &= \phi'(\iota_i^{-1}\alpha\iota_j)\phi(\iota_i^{-1}\alpha\iota_j)
\end{align*}

It follows from \cite[Section 3]{AlpWensley2009} that $\Gamma$ is surjective.\\

We have that $\iota_0^{-1}\alpha \iota_0$, an element of $G$, is sent to
$\iota_0^{-1}g_0^{-1}\alpha g_0\iota_0 = g_0^{-1}\alpha g_0$ by any pure automorphism (since $\iota_0$ is the identity element). 
Additionally, $\iota_0^{-1}\alpha\iota_j$ gets sent to $\iota_0^{-1}g_0^{-1}\alpha g_j\iota_j = g_0^{-1}\alpha g_j\iota_j$ for all $j\ne 0$.
It follows that $\Gamma(\psi, \textbf g)$ is the identity automorphism if and only if $g_k = g_0$ for all $k$ and $\psi(\alpha) = g_0 \alpha g_0^{-1}$—that is, $\psi$ is the inner automorphism induced by $g_0$. Thus, the kernel of $\Gamma$ is $\{(\Inn_g, (g^{-1}, g^{-1}, ...)) \mid g\in G\}$. \\
\qed

\subsection{The fundamental groupoid}
One can think of the fundamental groupoid as the fundamental
group with multiple basepoints.
To be precise, let $X$ be a topological space and 
$I\sub X$ a subset. We define the \textit{fundamental groupoid}
$\pi_1(X, I)$ as the set of homotopy classes of paths
$\delta\colon [0, 1] \rarr X$ where $\delta(0), \delta(1)\in I$,
with concatenation as the groupoid operation and object set $I$.
Notice that when $I = \{x_0\}$, we simply have 
the fundamental group.
Furthermore, if $f: X\rarr X$ is a homeomorphism with
$f(I) = I$, then $f$ induces an automorphism 
$f_*\colon \pi_1(X, I) \rarr \pi_1(X, I)$ of 
the fundamental groupoid. 
If $f(x) = x$ for all $x\in I$, then 
$f_*$ is a pure automorphism.

\subsection{The fundamental groupoid with infinitely many basepoints.\label{infbasepoints}}\hfill\\
Let $\Sigma$ be a surface with $b$ boundary components, and let
$X' = \{x_{0}, x_{1}, ..., x_{b-1}\} \sub \partial \Sigma$, where each $x_i$ lies on a different boundary component.
For each boundary component, let $\gamma_i$ be a loop with basepoint $x_{i} \subset \partial\Sigma$ 
so that $\gamma_i$ is on the boundary component containing $x_{i}$.
Let $\gamma_{i}$ be oriented such that the surface is to the left of $\gamma_{i}$.
We may want to extend $X'$ to a (possibly infinite) set of basepoints such that there is more than one basepoint on some boundary component.  
In this case, we'll parametrize each $\gamma_i$ as a closed path and denote each basepoint as $x_{i, \alpha}, \alpha \in [0, 1)$, so that $x_{i, \alpha}$ is the point $\gamma_i(\alpha)$.
Let $\gamma_i(\alpha, \beta)$ denote the curve along the path $\gamma_i$ starting at $\gamma_i(\alpha)$ and ending at $\gamma_i(\beta)$ and let $\gamma_i$ denote $\gamma_i(0,1)$.
Let $X = \{x_{i, \alpha} : x_{i, \alpha} \text{ is a basepoint}\}$ denote the set of all basepoints. Note that $X' \sube X$, where $x_i \in X'$ is identified with $x_{i, 0} \in X$.\\

We'll define a generating set for $\gpoid$ as follows:
$$A = \{t_1, u_1, ..., t_g, u_g, \iota_{0,0}, \iota_{0,\alpha_0}, \iota_{0, \alpha_1}, ..., 
\iota_{1,0}, \iota_{1,\alpha_0}, ..., 
\iota_{b-1, 0}, \iota_{b-1, \alpha_0}, ..., \gamma_0, ..., \gamma_{b-1}\}$$
where:
\begin{itemize}
\item $\iota_{0,0}$ is represented by the identity loop based at $x_{0,0}$,
\item $\iota_{i, 0}$ is represented by a simple arc from $x_{0, 0}$ to $x_{i, 0}$,
\item $\iota_{i, \alpha_j}$ for $0 < \alpha_j < 1$ is $\iota_{i, 0}$ composed with $[\gamma_i(0, \alpha_j)]$,
\item and all other elements in $A$ are simple closed curves.
\end{itemize}
The elements of $A$ are shown in Figure \ref{fig:2}.
Let $G$ be the vertex group based at $x_{0, 0}$ and consider the star based at $x_{0, 0}$, 
$\{\iota_{i, \alpha}\}_{i\in X, \alpha\in [0,1)}$, described above and shown in Figure \ref{fig:2}.\\

We will define another type of automorphism of the fundamental groupoid. Let the set of \textit{boundary-constant automorphisms} of the fundamental groupoid be
$$
\BCAut(\gpoid) = \{\phi\in \Aut(\gpoid) : \phi([\gamma_i(0, \alpha)]) = [\gamma_i(0,\alpha)], \alpha\in [0,1]\}
$$

\begin{figure}[!ht]
\includegraphics[width=1\linewidth,trim=50 100 50 50, clip]{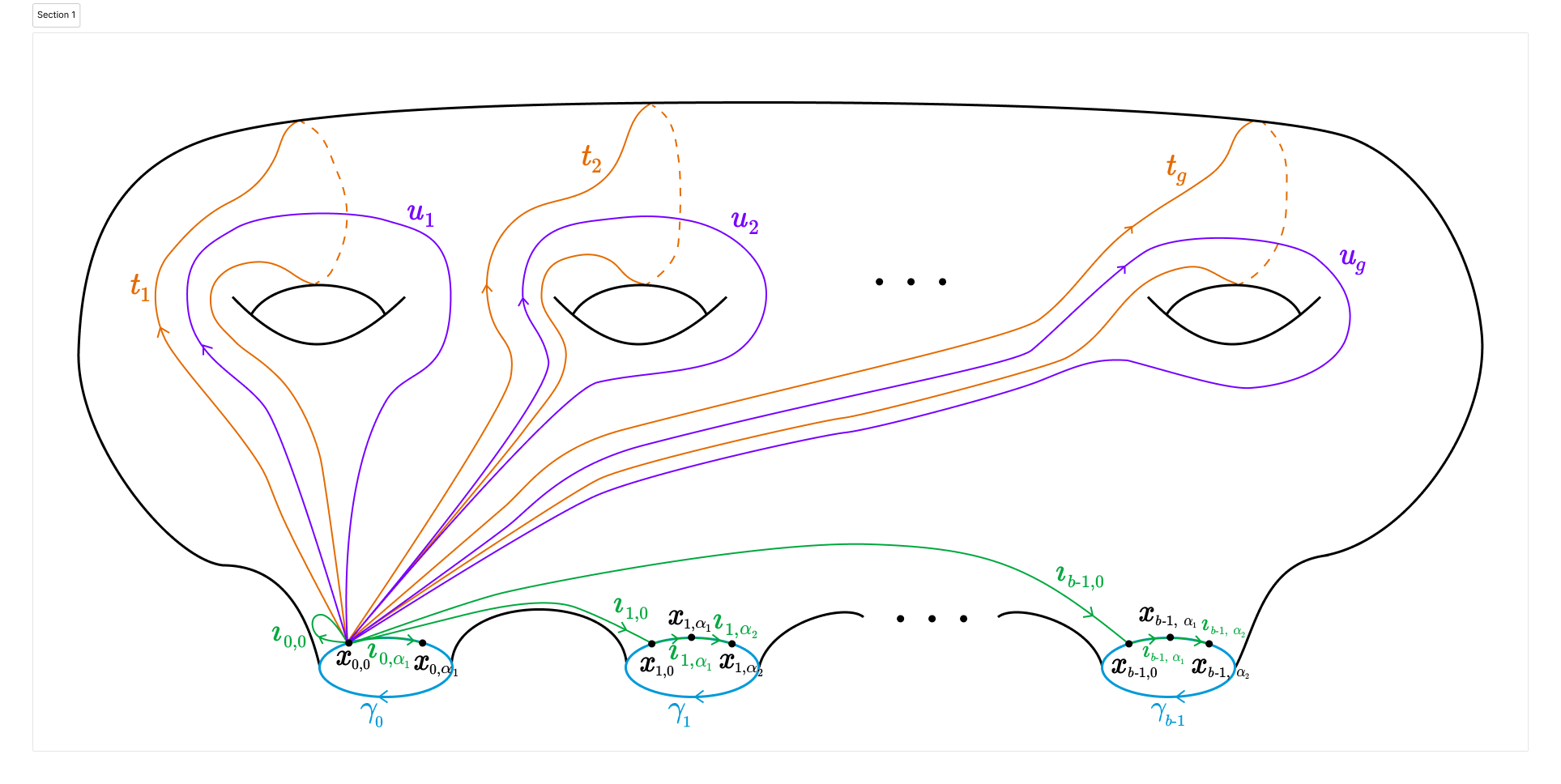}
\caption{A generating set for $\gpoid$}
\label{fig:2}
\end{figure}

For brevity, let $x_0$ denote $x_{0,0}$ and let $\iota_i$ denote $\iota_{i,0}$ from now on. By Lemma \ref{infbaseptsisom}, we have that we can represent a pure automorphism of a groupoid as an element of 
$(\Aut(G) \ltimes \prod_{i\in I}G) \slash \{(\Inn_g, (g^{-1}, g^{-1}, ...)) \mid g\in G\}$. The following lemma shows that we can do better for a boundary-constant automorphism of the fundamental groupoid.\\

\begin{lemma}\label{bcautdesc}
\textit{Let $\phi \in \BCAut(\gpoid)$. Let $\psi \Aut(\gp)$ and
$$h_{0,0}, h_{0, \alpha_1}, ..., h_{1, 0}, h_{1, \alpha_1}, ..., h_{b-1, 0}, h_{b-1, \alpha_1}, ... \in \gp$$ such that
$$\Gamma(\psi, (h_{0,0}, h_{0, \alpha_1}, ..., h_{b-1, 0}, h_{b-1, \alpha_1}, ...)) = \phi,$$ 
where} $\Gamma: \Aut(G) \ltimes \prod_{i\in I}G \rarr \PAut(\mathcal G)$ \textit{is the homomorphism from the proof of Lemma \ref{infbaseptsisom} defined by the star given at the beginning of Section \ref{infbasepoints} and shown in Figure \ref{fig:2}. Then, 
$h_{i, 0} = h_{i, \alpha}$ for all $0\le i < b$ and all $\alpha\in [0,1)$.}
\end{lemma}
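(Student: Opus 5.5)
Write the boundary arc $[\gamma_i(0,\alpha)]$, which lies in $G_{(i,0)(i,\alpha)}$, in star form and compare the two sides of the boundary-constant condition. By definition of the star, $\iota_{i,\alpha} = \iota_{i}\,[\gamma_i(0,\alpha)]$, so
$$[\gamma_i(0,\alpha)] = \iota_i^{-1}\iota_{i,\alpha} = \iota_i^{-1}\, e\, \iota_{i,\alpha},$$
where $e$ is the identity of the vertex group $G=\gp$. Thus in the unique normal form $\iota_{i}^{-1} b\, \iota_{i,\alpha}$ the middle element is $b = e$.

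Next apply $\phi=\Gamma(\psi,\mathbf h)$ using the formula from the proof of Lemma \ref{infbaseptsisom}. Since $\psi(e)=e$,
$$\phi([\gamma_i(0,\alpha)]) = h_{i,0}\,\iota_i^{-1}\,\psi(e)\,h_{i,\alpha}\,\iota_{i,\alpha},$$
read, as the formula there intends, in the normal form $\big(\iota_i^{-1} h_{i,0}^{-1}\big)\,\big(h_{i,\alpha}\iota_{i,\alpha}\big)$, or equivalently as $\iota_i^{-1}\big(h_{i,0}^{-1}h_{i,\alpha}\big)\iota_{i,\alpha}$. The inverse placement follows the convention $\varphi(c)=\inv{(g_i\iota_i)}\phi(b)(g_j\iota_j)$ stated earlier. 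Being boundary-constant means this equals $\iota_i^{-1} e\,\iota_{i,\alpha}$.

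Uniqueness of the normal form $\iota_i^{-1} g\,\iota_j$ with $g\in G$ (noted when stars were introduced) then gives $h_{i,0}^{-1}h_{i,\alpha}=e$, i.e.\ $h_{i,0}=h_{i,\alpha}$. This holds for every $0\le i<b$ and every $\alpha\in[0,1)$. The case $\alpha=0$ is trivial, and the case $i=0$ is covered as well since $\iota_{0}=e_{x_0}$.

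The only real obstacle is bookkeeping. The paper's formula for $\Gamma$ is written loosely, with $g_i\iota_i^{-1}$ juxtaposed where the type-correct expression is $\inv{(g_i\iota_i)}$. I will fix this convention explicitly at the start, matching the displayed representation of pure automorphisms, so that the computation lands in $G_{(i,0)(i,\alpha)}$. Either convention yields the same conclusion $h_{i,0}=h_{i,\alpha}$.

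One might also worry that the representative $(\psi,\mathbf h)$ is only determined up to the kernel $\{(\Inn_g,(g^{-1},g^{-1},\dots))\}$. This does not matter: changing representatives multiplies every $h$ by the same $g$, which preserves the equalities $h_{i,0}=h_{i,\alpha}$. In any case, the argument above applies to an arbitrary representative.
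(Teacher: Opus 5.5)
Your proof is correct and is essentially the paper's argument. Both rest on the identity $\iota_{i,\alpha}=\iota_i[\gamma_i(0,\alpha)]$ and the fact that $\phi$ fixes $[\gamma_i(0,\alpha)]$, followed by cancellation in the groupoid. The only difference is presentation: the paper compares $\phi(\iota_{i,\alpha})=h_{i,\alpha}\iota_{i,\alpha}$ with $\phi(\iota_i)[\gamma_i(0,\alpha)]=h_{i,0}\iota_{i,\alpha}$, whereas you evaluate the $\Gamma$-formula on the arc directly. Your version correctly carries the factor $h_{0,0}^{-1}$ that the paper's shorthand $\phi(\iota_i)=h_{i,0}\iota_i$ suppresses; it cancels either way.
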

\proof
Let $\varphi \in \BCAut(\gpoid)$ and consider $x_{i, 0}$. Pick a basepoint on the same boundary component, $x_{i, \alpha}$, and let
$\varphi(\iota_{i}) = h_{i, 0}\iota_i$ and
$\varphi(\iota_{i, \alpha}) = h_{i, \alpha}\iota_{i,\alpha}$, for some $h_{i, 0}, h_{i, \alpha} \in G$.
Note that $\iota_{i, \alpha} = \iota_{i}[\gamma_i(0, \alpha)]$, and so $\varphi(\iota_{i, \alpha}) = \varphi(\iota_{i}  )[\gamma_i(0, \alpha)]$ since $\varphi$ must fix $\gamma_i(0, \alpha)$.
Thus, $h_{i,\alpha}\iota_i
= h_{i,0}\iota_i[\gamma_i(0,\alpha)]
= h_{i,0}\iota_{i,\alpha}$,
which implies 
$h_{i,0} = h_{i,\alpha}$.\\
\qed\\

Lemmas \ref{infbaseptsisom} and \ref{bcautdesc} imply that an element $\varphi\in \BCAut(\gpoid)$ is completely determined by $(\psi, (h_1, \dots, h_{m-1}))$, where $\psi\in \Aut(\gp)$ and $h_i \in \gp$, with $\varphi(\iota_{j, \beta}^{-1}z\iota_{i, \alpha})
= (h_j\iota_{j,\beta})^{-1}\psi(z)(h_i\iota_{i,\alpha})$ for $z\in \gp$.


\section{Proof of result\label{proof}}
We are now ready to state and prove the main result.\\

\noindent
\textbf{Theorem 1.}
\textit{Let} $\Sigma$ \textit{be a bounded surface with $b$ boundary components and let} $X\sub \partial\Sigma$ \textit{be a subset of the boundary with at least one point on each boundary component. For each boundary component, let $\gamma_i$ be a loop on the boundary component, based at a point in $X$. Then, the map}
$$\Phi: \Mod(\Sigma) \rarr \BCAut(\gpoid),$$
\textit{given by the action of the mapping class group on the fundamental groupoid, is an isomorphism}

\proof
Let $g$ denote the genus of $\Sigma$.
First, we'll define the map $\Phi$.
Let $[\phi] \in \Mod(\Sigma)$, with $\phi$ a representative. 
Then, $\Phi([\phi])$ is the automorphism of $\pi_1(\Sigma, X)$ 
defined by $\Phi([\phi])([\beta]) = [\phi(\beta)]$, where $[\beta]\in \gpoid$. \\

This map is well-defined, as we certainly do get an automorphism of $\pi_1(\Sigma, X)$.
Since elements of the mapping class group fix the boundary components pointwise,
we have that $\phi(\gamma_i(0, \alpha)) = \gamma_i(0, \alpha)$ for all $i$ and $\alpha\in [0,1]$. Thus,
$\phi_*(\gamma_{i,\alpha}) = \phi(\gamma_i(0, \alpha)) = \gamma_{i,\alpha}$ holds for all $i, \alpha\in [0,1]$, and $\Phi([\phi]) \in \BCAut$. \\

Now, we'll show that $\Phi$ is injective via the Alexander Method \cite[Section 2.3]{primer}. 
Let $[f] \in \ker\Phi$ with $f$ a representative. 
Then, $\Phi([f]) = id$.\\

Consider the set $\{a,c_1, c_2, ..., c_{2g+1}, d_1, ..., d_{b-1}\}$ of curves and arcs in Figure \ref{fig:3} when $b>1$, and $\{a, c_1, c_2, ..., c_{2g+1}\}$ as shown in Figure \ref{fig:4} when $b=1$. 
We will select each arc $d_i$ such that it starts at the basepoint $x_i$ and ends at the basepoint $x_{i+1}$.
These collections satisfy the conditions to apply the Alexander Method. That is, the elements of these collections are essential simple closed curves and simple proper arcs that fill $\Sigma$, are pairwise in minimal position, and are pairwise non-isotopic. Also, for any three distinct elements, at least one pairwise intersection is empty.\\

\begin{figure}[!ht]
    \includegraphics[width=1\linewidth,trim=50 100 50 50, clip]{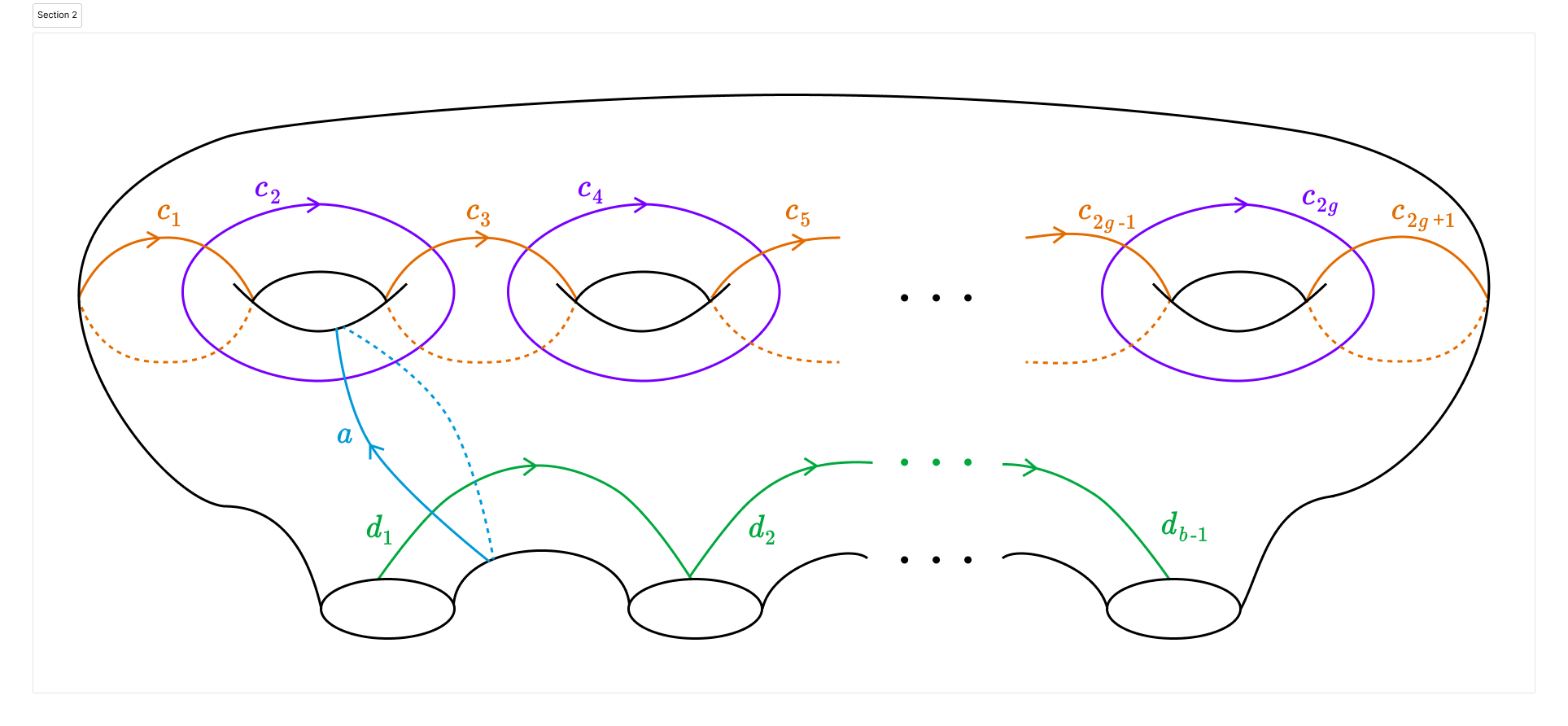}
    \caption{Collection of curves for the Alexander method for a surface with multiple boundary components}
    \label{fig:3}
\end{figure}

\begin{figure}[!ht]
    \includegraphics[width=1\linewidth,trim=50 100 50 50, clip]{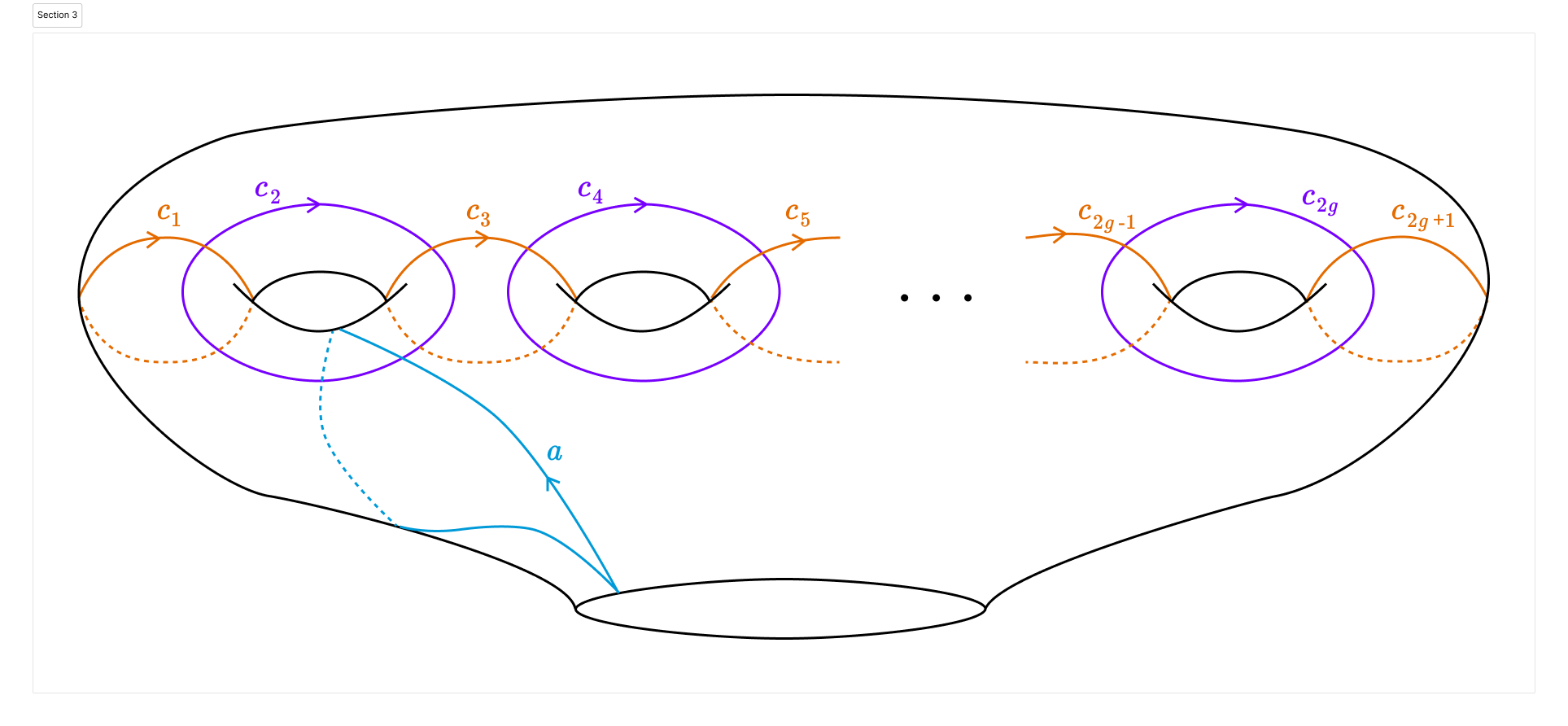}
    \caption{Collection of curves for the Alexander method for a surface with one boundary component}
    \label{fig:4}
\end{figure}

We can take any element $\alpha$ of the collection and naturally get an element of the fundamental groupoid that is freely homotopic to $\alpha$ by taking a path $\delta$ from $x_0$ to the curve. Then, $\delta \alpha\delta^{-1}\sim \alpha$ with $[\delta\alpha\delta^{-1}] \in \gpoid$,
where $\sim$ denotes the equivalence relation of free homotopy relative to $\partial\Sigma$.
Since $f$ acts trivially on the fundamental groupoid, we have that
$\alpha \sim \delta\alpha\delta^{-1} \sim 
f_*([\delta\alpha\delta^{-1}]) \sim f(\alpha)$,
where $f_*$ is the automorphism of the fundamental groupoid induced by $f$.
It follows that $f$ is isotopic to the identity.
Thus, $[f] = [id]$, and $\Phi$ is injective. \\

It remains to show that $\Phi$ is surjective. 
First, fix an orientation on $\Sigma$ such that the induced orientation of the boundary components is such that the surface is to the left of the boundary.
Select the same set of generating curves, $A$, and the same star as in Section \ref{infbasepoints}.
Take $\varphi \in \BCAut(\gpoid)$. \\

Define $s_i = \iota_i\gamma_i\inv{\iota_i}$.
Notice that
\begin{align*}
    \varphi(s_i) &= \varphi(\iota_i\gamma_i\inv{\iota_i}) \\
    &= \varphi(\iota_i)\varphi(\gamma_i)\varphi(\inv{\iota_i}) \\
    &= h_i\iota_i\gamma_i\inv{\iota_i}\inv{h_i} \\
    &= h_is_i\inv{h_i}
\end{align*}
for some $h_i\in \pi_1(\Sigma, x_0)$. Thus, $\varphi$ preserves conjugacy classes of the $s_i$. \\

Since we're considering homeomorphisms of $\Sigma$
that fix the boundary components,
it must be that the orientation of $\Sigma$ is preserved. 
Recall Theorem 2.2.1, which tells us when an automorphism of the fundamental
group of a bounded surface is induced by a homeomorphism.
If we don't require boundary components to be fixed pointwise, then
we have that $\varphi$ is induced by a homeomorphism.
But, how do we get a homeomorphism
that induces an automorphism of $\gp$, where
$x_0\in \partial_0$? \\

Let $y$ be a basepoint on $\Sigma\setminus\partial\Sigma$,
 define a path $\delta$ from $x_0$
to $y$, and define $\psi$ to be an automorphism of $\pi_1(\Sigma, y)$ 
with $\psi(\nu) = [\delta^{-1}]\theta[\delta]$, where $\nu\in \pi_1(\Sigma, y)$
and $\theta = \varphi([\delta]\nu[\delta^{-1}])$.
Let $[\omega] = [\delta^{-1}]\gamma_0[\delta]$, where $\omega$ is a representative, and note that 
\begin{align*}
    \psi([\omega]) &= [\delta^{-1}]z[\delta] \text{ where } 
    z = \varphi([\delta\omega\delta^{-1}]) = \varphi(\gamma_0) = \gamma_0\\
    &= [\delta^{-1}]\gamma_0[\delta] \\
    &= [\omega]
\end{align*}
We can apply Theorem 2.2.1 here, and we get that $\psi$ is induced by a homeomorphism $f'$.
Now, we'll isotope $f'$ to get $f$ such that
$f(x) = x, f(y) = y, f|_{\partial\Sigma} = id,$ and $f_* = \psi$, where $f_*$ is 
the automorphism of $\pi_1(\Sigma, y)$ induced by $f$. 
Note that we are able to isotope $f'$ to $f$ and have 
$f|_{\partial\Sigma} = id$ because $f$ is orientation-preserving.\\

We would now like to answer the question: what does the automorphism of $\gp$ 
induced by $f$ look like? Let $f_\#$ denote this automorphism, which we define to be 
$f_\#(\chi) = [\delta] f_*([\delta^{-1}] \chi [\delta]) [\delta^{-1}]$, for $\chi\in \gp$.

Now, consider $f_\#(\gamma_0)$:
\begin{align*}
    f_\#(\gamma_0) &= [\delta]f_*([\delta^{-1}]\gamma_0[\delta])[\delta^{-1}]\\
    &= [\delta]f_*([\omega])[\delta^{-1}]\\
    &= [\delta]\psi([\omega])[\delta^{-1}]\\
    &= [\delta\omega\delta^{-1}]\\
    &= \gamma_0
\end{align*}

Notice that since $f_* = \psi$ and $\psi([\omega]) = [\omega]$, 
we have $[f(\omega)] = [\omega]$. It follows that
\begin{align*}
    [f(\delta)\delta^{-1}]\gamma_0[\delta f(\delta)^{-1}] 
    &= [f(\delta) \omega f(\delta)^{-1}] \\
    &= [f(\delta) f(\omega) f(\delta)^{-1}]\\
    &= [f(\delta\omega\delta^{-1})] \\
    &= f_\#([\delta\omega\delta^{-1}]) \\
    &= f_\#(\gamma_0) \\
    &= \gamma_0
\end{align*}

Thus, $[f(\delta)\delta^{-1}]\gamma_0 = \gamma_0[f(\delta)\delta^{-1}]$, 
which implies that $ [f(\delta)\delta^{-1}] = \gamma_0^k, ~ k\in \Z$, since 
$\gamma_0$ is represented by a simple closed curve, and hence, a primitive element of a free group. 
Let $T_{\gamma_0}$ denote a representative of the Dehn twist about a representative of $\gamma_0$ that does not touch the $0$-th boundary component.
It follows that we can compose $f$ with $T_{\gamma_0}^{-k}$
to get a homeomorphism $\phi$ that 
induces $\varphi\in \Aut(\gp)$ (see Figure \ref{fig:5}).\\

Now, there exists a homeomorphism isotopic to $\phi$ 
that fixes the $x_i$ on each boundary component. 
By the Isotopy Extension Property \cite{isotopyextension}, 
there exists a homeomorphism $\eta$ of $\Sigma$ isotopic to $\phi$ that fixes
the boundary components pointwise. Since $\eta$ is isotopic to $\phi$,
we have that $\eta$ still induces $\varphi$. \\

Now, we have $\eta_* \in \Aut(\gpoid)$ with 
$\eta_* = (\varphi, (h_{1, 1}, ..., h_{b-1, 1}, ...))$, 
for some $h_i \in \gp$. By the way we
have defined $\iota_{i, \alpha}$, we have 
$h_{i,j} = h_{i,1}$ for all $j$ by Lemma \ref{bcautdesc}, so we will denote
$h_{i, 1}$ as $h_i$.
We will show that $h_i = g_is_i^{-k_i}$ for $k_i \in \Z, g_i\in \gp$:
\begin{align*}
    \eta_*(s_i) &= \varphi(s_i) \text{, since } s_i \in \gp \\
    \eta_*(s_i) &= \eta_*(\iota_i\gamma_i\inv{\iota_i}) \\
    &= \eta_*(\iota_i)\eta_*(\gamma_i)\inv{\eta_*(\iota_i)} \\
    &= g_i\iota_i\gamma_i\inv{\iota_i}\inv{g_i} \\
    &= g_is_ig_i^{-1}
\end{align*}
Since $s_i$ is represented by a simple closed curve,
$s_i$ is primitive \cite[pp. 23-26]{primer}. Thus, we have
\begin{align*}
& \varphi(s_i) = g_is_ig_i^{-1} \\
\implies &g_is_i\inv{g_i} = h_is_i\inv{h_i} \\
    \implies &\inv{h_i}g_is_i = s_i\inv{h_i}g_i \\
    \implies &\inv{h_i}g_i = s_i^{k_i}, ~ \text{ where } k_i\in \Z.
\end{align*}
It follows that $h_i = g_is_i^{-k_i}$, as desired. \\

Our goal is now to try to adjust $\eta$ so that it fixes the boundary components pointwise,
but still induces $\varphi$. We will do this by adding some Dehn twists to ``undo" the $s_i^{-k_i}$.
Let $\bar{\gamma_i}$ be a representative of $\gamma_i$ such that $\bar{\gamma_i}$ does not touch 
the $i$th boundary component.
Let $T_i$ be the Dehn twist about $\bar{\gamma_i}$.
Then, let $\mu = \eta \circ T_1^{-k_1} \circ ... \circ T_{b-1}^{-k_{b-1}}$.

\begin{figure}[H]
    \includegraphics[width=1\linewidth,trim=50 100 50 50, clip]{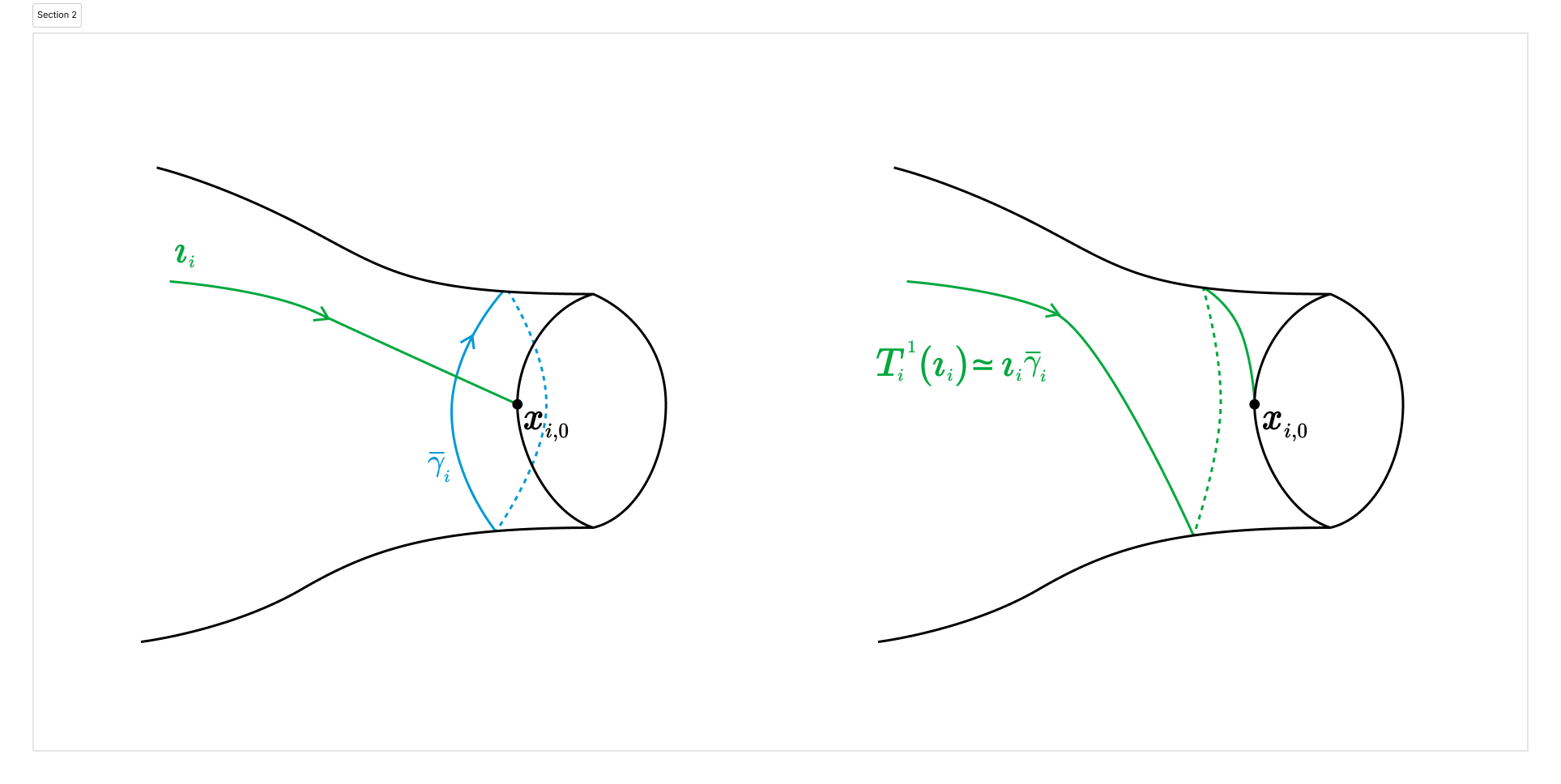}
    \caption{Dehn twist about $\bar\gamma_i$}
    \label{fig:5}
\end{figure}

$\mu$ certainly still induces $\varphi$, since $T_i^{-k_i}([a]) = [a]$ 
for any $a\in A$ by the way we have chosen the elements of the generating set $A$.
Notice that $s_i^{-k_i} 
= \iota_i{\gamma_i}^{-k_i}\inv{\iota_i}$.\\

Now, we have 
\begin{align*}
    \mu_*(\iota_i) &= \eta_*\circ T_{1}^{-k_1} \circ \cdots \circ T_{b-1}^{-k_{{b-1}}}(\iota_i) \\
    &= \eta_*\circ T_{i}^{-k_i}(\iota_i) \text{, since } \gamma_j \text{ (and thus } T_{j} \text{) is disjoint with } \iota_i \text{ when } i \ne j\\
    &= \eta_*(\iota_i\gamma_i^{-k_i}) \\
    &= g_i\iota_i\gamma_i^{-k_i}
\end{align*}
Note that $h_i = g_is_i^{-k_i} =
g_i\iota_i\gamma_i^{-k_i}\inv{\iota_i}$.
Thus, $h_i\iota_i = g_i\iota_i\gamma_i^{-k_i}$, and hence,
$\mu_*(\iota_i) = h_i\iota_i$.
It follows that $\mu$ fixes the boundary components pointwise and
induces $\varphi$.
Therefore, $\Phi$ is surjective, 
which completes the proof. \qed \\

\bibliographystyle{plain} 
\bibliography{references} 

\end{document}